\newtheorem{lemma}{Lemma}
\newcommand{\ie}{{\it i.e.}}
\newcommand{\m}{^{(m)}}
\newcommand{\nnn}{{n\in\xN}}
\newcommand{\mnn}{{m\in\xN}}
\newcommand{\bfC}{{\boldsymbol C}}
\newcommand{\bfI}{{\boldsymbol I}}
\newcommand{\bfH}{{\boldsymbol H}}
\newcommand{\bfu}{{\boldsymbol u}}
\newcommand{\bfe}{{\boldsymbol e}}
\newcommand{\bfn}{\boldsymbol n}
\newcommand{\bfx}{\boldsymbol x}
\newcommand{\bfvarphi}{{\boldsymbol \varphi}}
\newcommand{\dx}{\, \mathrm{d}\bfx}
\newcommand{\dt}{\, \mathrm{d} t}
\newcommand{\ui}{u_i}
\newcommand{\Hmeshzero}{\bfH_{\edges,0}}
\newcommand{\Hmeshi}{H_{\edges^{(i)}}}
\newcommand{\Hmeshizero}{H_{\edges^{(i)},0}}
\newcommand{\Hmeshunzero}{H_{\edges^{(1)},0}}
\newcommand{\Hmeshdezero}{H_{\edges^{(2)},0}}
\newcommand{\characteristic}{{1 \! \! 1}}
\newcommand{\dive}{{\rm div}}
\newcommand{\iinud}{i=1,2}
\newcommand{\nnud}{n \in \llbracket 0, N-1 \rrbracket}
\newcommand{\xN}{\mathbb{N}}
\newcommand{\xR}{\mathbb{R}}
\newcommand{\xL}{\mathrm{L}}
\newcommand{\mesh}{{\mathcal M}}
\newcommand{\edge}{{\sigma}}
\newcommand{\edgeperp}{{\tau}}
\newcommand{\edges}{{\mathcal E}}
\newcommand{\edgesK}{\edges(K)}
\newcommand{\edgesint}{{\mathcal E}_{\mathrm{int}}}
\newcommand{\edgesext}{{\mathcal E}_{\mathrm{ext}}}
\newcommand{\edgesinti}{{\mathcal E}_{\mathrm{int}}^{(i)}}
\newcommand{\edgesexti}{{\mathcal E}_{\mathrm{ext}}^{(i)}}
\newcommand{\edgesi}{{\edges\ei}}
\newcommand{\edgesj}{{\edges\ej}}
\newcommand{\edged}{\epsilon}
\newcommand{\edgesd}{{\widetilde {\edges}}}
\newcommand{\edgesdi}{{\edgesd^{(i)}}}
\newcommand{\edgesdinti}{{\edgesd^{(i)}_{{\rm int}}}}
\newcommand{\edgesdexti}{{\edgesd^{(i)}_{{\rm ext}}}}
\newcommand{\ei}{^{(i)}}
\newcommand{\ej}{^{(j)}}
\newcommand{\nKedge}{\bfn_{K,\edge}}
\begin{document}
\title{A decoupled staggered scheme for the shallow water equations}
\author{Rapha\`ele Herbin, Jean-Claude Latch\'e, \\ Youssouf Nasseri and Nicolas Therme}

\begin{abstract}
We present a first order scheme based on a staggered grid for the shallow water equations with topography in two space dimensions, which enjoys several properties: positivity of the water height, preservation of constant states, and weak consistency with the equations of the problem and with the associated entropy inequality.
\end{abstract}

\KeysAndCodes{Shallow water, finite volumes, staggered grid}{65M08,76B99}
%
%
\section{Introduction}\label{sec1:pbcont}

The shallow water equations form a hyperbolic system of two conservation equations (mass and momentum) which are obtained when modelling a flow whose vertical height is considered small with respect to the plane scale. 
The solution of such a system may develop shocks, so that the finite volume method is usually preferred for numerical simulations. 
Two main approaches are found: one is the colocated approach which is usually based on some approximate Riemann solver, see e.g. \cite{bou-04-non} and references therein; the other one is based on a staggered arrangement of the unknowns on the grid.
Indeed, staggered schemes have been used for some time in the hydraulic and ocean engineering community, see e.g. \cite{ara-81-pot,bon-05-ana,ste-03-sta}.
They have been recently analysed in the case of one space dimension \cite{doy-14-exp, gun-15-num}, following the works on the related barotropic Euler equations, see \cite{her-18-con} and references therein.
In the present work, we obtain a discrete local entropy inequality; furthermore, we extend the consistency analysis of the scheme to the case of two space dimensions, and we weaken the assumptions on the estimates, namely we no longer require a bound on the $BV$ norm of the approximate solutions, at least for the weak formulation (the passage to the limit in the entropy still necessitates a time $BV$ boundedness).
 
\medskip
Let $\Omega$ be an open bounded domain of $\xR^2$ and let $T >0$.
We consider the shallow water equations with topography over the space and time domain $\Omega \times (0,T)$:
\begin{subequations} \label{eq:sw}
\begin{align}\label{eq:mass} &
\partial_t h + \dive (h \bfu) =0 && \mbox{in } \; \Omega \times (0, T),
\\ \label{eq:mem} &
\partial_t (h \bfu) + \dive(h \bfu \otimes \bfu) + \nabla p + g h \nabla z = 0 && \mbox{in }\ \Omega \times (0, T), 
\\ \label{bc} &
p = \frac{1}{2} g h^2 && \mbox{in }\ \Omega \times (0, T) ,
\\ \label{bc:ins} &
\bfu \cdot \bfn =0 && \mbox{ on }\ \partial\Omega \times (0,T),
\\ \label{initiale} &
h(\bfx, 0) = h_0, \,\, \bfu(\bfx,0)=\bfu_0 && \mbox{ in} \ \Omega. 
\end{align} \end{subequations}
where $t$ stands for the time, $g$ is the standard gravity constant and $z$ the (given) topography, which is supposed to be regular in this paper.
These equations solve the water height $h$ and the velocity $\bfu$. 

\medskip
Let us recall that if $(h, \bfu)$ is a regular solution of \eqref{eq:sw}, the following elastic potential energy balance and kinetic energy balance is obtained by manipulations on the mass and momentum equations:
\begin{align} \label{pot_bal} &
\partial_t (\frac{1}{2} g h^2) + \dive (\frac{1}{2} g h^2 \bfu) + \frac{1}{2} g h^2 \dive \bfu = 0
\\ \label{kin_bal} &
\partial_t (\frac{1}{2} h \vert \bfu \vert^2) + \dive (\frac{1}{2} h  \vert \bfu \vert^2 \bfu) + \bfu \cdot \nabla p  + gh \bfu \cdot \nabla z = 0.
\end{align}
Summing these equations, we obtain en entropy equality of the form $\partial_t \eta + \dive \Phi = 0$, where the entropy-entropy flux pair $(\eta, \Phi)$ is given by:
\begin{equation}\label{def:entropy}
\eta = \frac 1 2  h|\bfu|^2 + \frac 1 2 g h^2 + ghz \text{ and } \Phi =(\eta +  \frac 1 2 g h^2) \bfu.
\end{equation}
For non regular functions the above manipulations are no longer valid, and the entropy inequality $\partial_t \eta + \dive \Phi \leq 0$ is satisfied in a distributional sense.

\medskip
In this paper, we build a decoupled scheme, involving only explicit steps; the resulting approximate solutions are shown to satisfy some discrete equivalent of \eqref{pot_bal} and \eqref{kin_bal}; furthermore, under some convergence and boundedness assumptions, the approximate solutions are shown in Section \ref{sec:cons} to converge to a weak solution of \eqref{eq:sw} and to satisfy a weak entropy inequality. 
%
%
\section{Mesh and space discretizations}\label{sec:discop}

Let $\Omega$ be a connected subset of $\xR^2$ consisting in a union of rectangles whose edges are assumed to be orthogonal to the canonical basis vectors, denoted by $(\bfe^{(1)}, \bfe^{(2)})$.

\begin{defn}[MAC grid] \label{def:MACgrid}
A discretization $(\mesh, \edges)$ of $\Omega$ with a staggered rectangular grid (or MAC grid), is defined by:
\begin{list}{--}{\itemsep=0.ex \topsep=0.5ex \leftmargin=1.cm \labelwidth=0.7cm \labelsep=0.3cm \itemindent=0.cm}
\item A primal grid $\mesh$ which consists in a conforming structured partition of $\Omega$ in rectangles, possibly non uniform.
A generic cell of this grid is denoted by $K$, and its mass center by $\bfx_K$.
The scalar unknowns (water height and pressure) are associated to this mesh.
\item The set of all edges of the mesh $\edges$, with $\edges= \edgesint \cup \edgesext$, where $\edgesint$ (resp. $\edgesext$) are the edges of $\edges$ that lie in the interior (resp. on the boundary) of the domain.
The set of edges that are orthogonal to $\bfe\ei$ is denoted by $\edgesi$, for $\iinud$.
We then have $\edgesi= \edgesinti \cup \edgesexti$, where $\edgesinti$ (resp. $\edgesexti$) are the edges of $\edgesi$ that lie in the interior (resp. on the boundary) of the domain.

For $\edge\in\edgesint$, we write $\edge = K|L$ if $\edge = \partial K \cap \partial L$.
A dual cell $D_\edge$ associated to an edge $\edge \in\edges$ is defined as follows:
\begin{list}{-}{\itemsep=0.ex \topsep=0.ex \leftmargin=1.cm \labelwidth=0.7cm \labelsep=0.1cm \itemindent=0.cm}
\item if $\edge=K|L \in \edgesint$ then $D_\edge = D_{K,\edge}\cup D_{L,\edge}$, where $D_{K,\edge}$ (resp. $D_{L,\edge}$) is the half-part of $K$ (resp. $L$) adjacent to $\edge$ (see Fig. \ref{fig:mesh}); 
\item if $\edge \in \edgesext$ is adjacent to the cell $K$, then $D_\edge=D_{K,\edge}$.
\end{list}
For each dimension $i=1,2$, the domain $\Omega$ is partitioned in dual cells: $\Omega = \cup_{\edge \in \edgesi} D_\edge$, $\iinud$; the $i^{th}$ partition is refered to as the $i^{th}$ dual mesh; it is associated to the $i^{th}$ velocity component, in a sense which is clarified below.
The set of the edges of the $i^{th}$ dual mesh is denoted by $\edgesdi$ (note that these edges may be orthogonal to any vector of the basis of $\xR^2$ and not only $\bfe\ei$) and is decomposed into the internal and boundary edges: $\edgesdi = \edgesdinti\cup \edgesdexti$.
The dual edge separating two duals cells $D_\edge$ and $D_{\edge'}$ is denoted by $\edged=\edge|\edge'$. We denote by $D_\edged$ the dual cell associated to a dual edge $\edged \in \edgesd$ defined as follows:
\begin{list}{-}{\itemsep=0.ex \topsep=0.ex \leftmargin=1.cm \labelwidth=0.7cm \labelsep=0.1cm \itemindent=0.cm}
\item if $\edged =\edge|\edge' \in \edgesd_{int}$ then $D_\edged = D_{\edge,\edged}\cup D_{\edge',\edged}$, where $D_{\edge,\edged}$ (resp. $D_{\edge',\edged}$) is the half-part of $D_\edge$ (resp. $D_{\edge'}$) adjacent to $\edged$ (see Fig. \ref{fig:mesh}); 
\item if $\edged \in \edgesd_{ext}$ is adjacent to the cell $D_\edge$, then $D_\edged=D_{\edge,\edged}$.
\end{list}
\end{list}
\end{defn}

\medskip 
In order to define the scheme, we need some additional notations.
The set of edges of a primal cell $K$ and of a dual cell $D_\edge$ are denoted by $\edgesK$ and $\edgesd(D_\edge)$ respectively.
For $\edge \in \edges$, we denote by $\bfx_\edge$ the mass center of $\edge$.
The vector $\bfn_{K,\edge}$ stands for the unit normal vector to $\edge$ outward $K$. 
In some cases, we need to specify the orientation of various geometrical entities with respect to the axis:
\begin{list}{-}{\itemsep=0.ex \topsep=0.5ex \leftmargin=1.cm \labelwidth=0.7cm \labelsep=0.3cm \itemindent=0.cm}
\item a primal cell $K$ will be denoted $K = [\overrightarrow{\edge \edge'}]$ if $\edge, \edge' \in \edgesi(K)$ for some $\iinud$ are such that $(\bfx_{\edge'} - \bfx_\edge) \cdot \bfe\ei >0$;
\item we write $\edge =\overrightarrow{K|L}$ if $\edge \in\edgesi$, $\edge=K|L$ and $\overrightarrow{\bfx_K\bfx_L}\cdot \bfe\ei>0$ for some $\iinud$;
\item the dual edge $\edged$ separating $D_\edge$ and $D_{\edge'}$ is written $\edged = \overrightarrow{\edge|\edge'}$ if $\overrightarrow{\bfx_\edge \bfx_{\edge'}}\cdot \bfe\ei>0$ for some $\iinud$.
\end{list}

\begin{figure}[tp]\centering
\includegraphics[scale=.7]{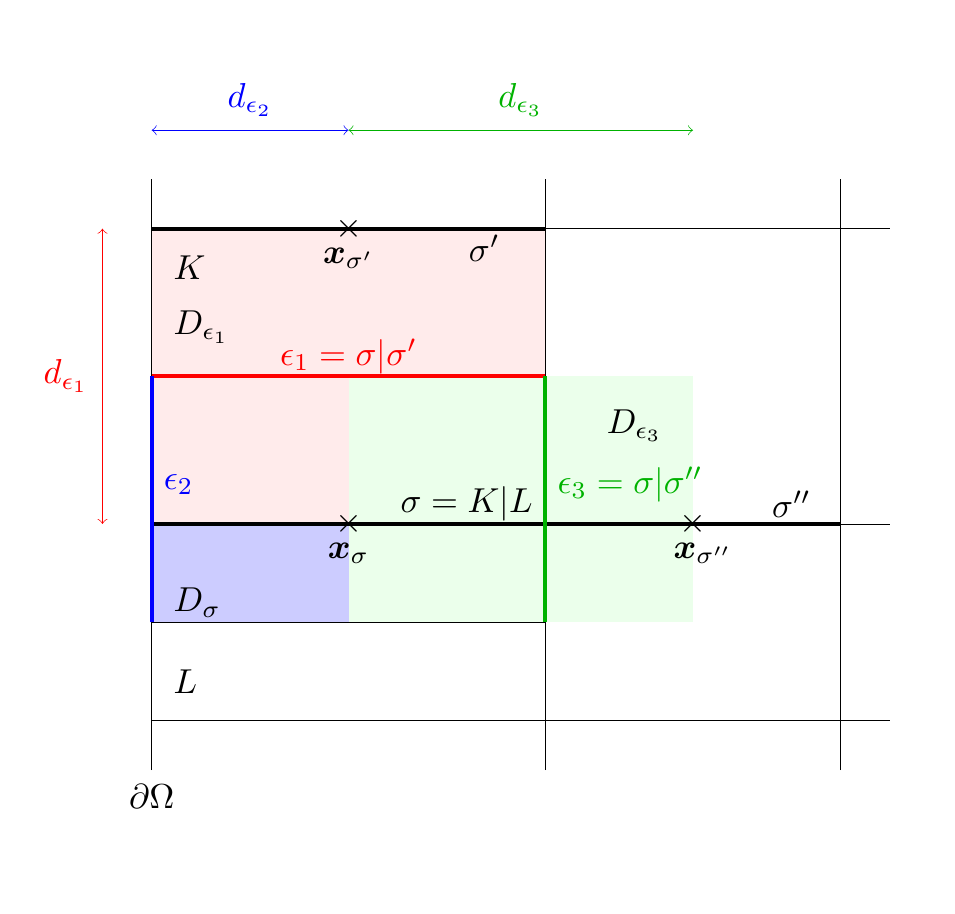}
\caption{Notations for control volumes and dual cells (in two space dimensions, for the second component of the velocity).}
\label{fig:mesh}
\end{figure}

The size $\delta_\mesh$ of the mesh and its regularity $\eta_\mesh$ are defined by:
\begin{equation}  \label{regmesh} 
\delta_\mesh=\max_{K\in\mesh }  \diam(K), \mbox{ and }
\eta_\mesh = \max \Bigl\{ \frac{|\edge|}{|\edge'|},\ \edge \in \edgesi,\ \edge' \in \edgesj,\ i, j = 1, 2,\ i\not= j \Bigr\},
\end{equation}
where $|\cdot|$ stands for the one (or two) dimensional measure of a subset of $\xR$ (or $\xR^2$).

\medskip
The discrete velocity unknowns are associated to the dual cells and are denoted by $(u_{i,\edge})_{\edge\in\edgesi}$, $\iinud$, while the scalar unknowns (discrete water height and pressure) are associated to the primal cells and are denoted respectively by $(h_K)_{K\in\mesh}$ and $(p_K)_{K\in\mesh}$.
The scalar unknown space $L_\mesh$ is defined as the set of piecewise constant functions over each grid cell $K$ of $\mesh$, and the discrete $i^{th}$ velocity space $\Hmeshi$ as the set of piecewise constant functions over each of the grid cells $D_\edge,\ \edge\in\edgesi$.
As in the continuous case, the Dirichlet boundary conditions are taken into account by defining the subspaces $\Hmeshizero \subset \Hmeshi,\ \iinud$ as follows
\[
\Hmeshizero=\Bigl\{u_i\in\Hmeshi,\ u_i(\bfx) = 0,\, \forall \bfx\in D_\edge,\ \edge \in \edgesexti \Bigr\}.
\]
We then set $\Hmeshzero= \Hmeshunzero \times \Hmeshdezero$.
Defining the characteristic function $\characteristic_A$ of any subset $A \subset \Omega$ by $\characteristic_A(\bfx)=1$ if $\bfx \in A$ and $\characteristic_A(\bfx)=0$ otherwise, the functions $\bfu =(u_1,  u_2) \in \Hmeshzero$, may then be written:
\begin{equation} \label{eq:ufx}
\ui(\bfx) = \sum_{\edge\in \edgesi} u_{i,\edge} \characteristic_{D_\edge}(\bfx),\ \iinud.
\end{equation}
For $\bfu \in \Hmeshzero$, let $\llbracket u_i \rrbracket_{\edged} = |u_{i,\edge} - u_{i,\edge'}|$, for $\edged = \edge|\edge' \in \edgesd\ei_{int}, \iinud$.
In the same way the functions $h \in L_\mesh$ are defined by $h(\bfx) = \sum_{K \in \mesh} h_K \characteristic_K(\bfx)$ and the notation $\llbracket \ \rrbracket_\edge$ refers to $\llbracket h \rrbracket_{\edge} = |h_K-h_L|, \, \text{ for }  \edge = K|L \in \edgesint(K)$.
%
%
\section{A decoupled explicit scheme} \label{sec:time} \label{sch}

\paragraph{Description of the scheme}
Let us consider a uniform discretisation $0 = t_0 < t_1 < \cdots < t_N = T$ of the time interval $(0,T)$, and let $\delta t = t_{n+1} - t_n$ for $n = 0, 1, \cdots, N - 1$ be the (constant) time step. 
The discrete velocity $\bfu$ and water height $h$ unknowns are defined by: 
\begin{align*} &
\bfu(\bfx, t) = \sum_{n = 0}^{N-1} \bfu^{n+1}(\bfx) \characteristic_{[t_n, t_{n+1})}(t),\,  \text{ with } \bfu^{n+1} \in \Hmeshzero,
\\[-1ex] &
h(\bfx, t) = \sum_{n = 0}^{N-1} h^{n+1}(\bfx) \characteristic_{[t_n, t_{n+1})}(t), \text{ with } h^{n+1} \in L_{\mesh},
\end{align*}
where $ \characteristic_{[t_n, t_{n+1})}$ is the characteristic function of the interval $[t_n , t_{n+1} )$ and the space functions $\bfu^n$ and $h^n$ take the form defined in the previous section.
We propose the following decoupled discretisation of the system (\ref{eq:sw}), written in compact form, with the various discrete operators defined below.
\begin{subequations}\label{scheme}
\begin{align}  &
\mbox{{\bf Initialisation}:} \qquad \label{sch-init}  
\bfu^0= \mathcal{P}_\edges \bfu_0, \, h^{0} = \mathcal{P}_\mesh h_0, \,p^{0} = \frac 1 2 g (h^{0})^{2}.
\\[1ex] 
\nonumber & \mbox{{\bf Iteration}} \, n, \, 0 \leq n \leq N-1: \text{solve for } \bfu^{n+1} \in \Hmeshzero, h^{n+1} \in L_{\mesh} \mbox{ and } p^{n+1} \in L_{\mesh}: 
\\[0.5ex] \label{sch-mass} & \hspace{15ex}
\eth_t h^{n+1} + \dive_\mesh\,(h^{n} \bfu^{n}) =0, 
\\[0.5ex] \label{sch-pressure}
& \hspace{15ex}
p^{n+1} = \frac{1}{2} g (h^{n+1})^2,
\\[0.5ex] \label{sch-vitesse} & \hspace{15ex}
\eth_t (h \bfu)^{n+1} + {\bfC}_\edges (h^{n} \bfu^{n})\bfu^{n} + \nabla_\edges p^{n+1} + g\, {\bfI}_\edges h^{n+1} \ \nabla_\edges z = 0,
\end{align}
\end{subequations}

\medskip
{\it Projection operators} - The operators $\mathcal{P}_\edges$ and $\mathcal{P}_\mesh$ used in the initialisation step are defined by $\mathcal{P}_\edges = (\mathcal{P}_{\edges^{(i)}})_{i=1,\cdots,d}$ with\\
\begin{equation} \label{def:proj:oper1} 
\begin{array}{l| l} \displaystyle
\mathcal{P}_{\edges^{(i)}}: \quad 
& \quad
L^1(\Omega) \longrightarrow \Hmeshizero 
\\[1ex] & \displaystyle \quad
v \;\longmapsto \displaystyle \mathcal{P}_\edgesi v = \sum_{\edge \in \edgesinti} v_{\edge}\, \characteristic_{D_\edge}
\mbox{ with }
v_{\edge} = \frac 1 {|D_\edge|}\int_{D_\edge} v(\bfx) \dx, \mbox{ for } \edge \in \edgesinti.
\end{array}
\end{equation} 
For $q \in L^2(\Omega)$, $\mathcal{P}_\mesh q \in L_\mesh$ is defined by: 
\begin{equation}\label{def:proj:oper2}
\displaystyle \mathcal{P}_\mesh q = \sum_{K \in \mesh} q_K \characteristic_K \text{ with } \displaystyle q_K = \dfrac{1}{|K|} \int_{K} q(\bfx) \dx \text{ for } K \in \mesh.
\end{equation}

{\it Discrete time derivative} - The symbol $\eth_t$  denotes the discrete time derivative for both  water height and momentum:
\begin{align*}
& 
 \eth_t   h^{n+1} = \sum_{K\in\mesh}  \frac 1 {\delta t}(h^{n+1}_K - h^{n}_K) \characteristic_K, \qquad \eth_t (h \bfu)^{n+1} = (\eth_t  (h u_1)^{n+1}, \ldots,\eth_t  (h u_d)^{n+1}) \\
& \text{with } \,\eth_t(h u_i)^{n+1}=\! \sum_{\edge\in {\edges^{(i)}}}  \frac 1 {\delta t}(h^{n+1}_{D_\edge} u^{n+1}_{i, \edge} - h^{n}_{D_\edge} u^{n}_{i,\edge}) \characteristic_{D_\edge}, \, \iinud,
\end{align*}
where $h_{D_\edge}$ is the discrete water height in the dual cell, which is computed from the primal unknowns $(h_K^n)_{\nnn, K \in \mesh}$ and defined so as to satisfy a discrete mass balance, see below.
\\

\noindent 
{\it Discrete divergence and gradient operators} -
The discrete divergence operator $\dive_\mesh$ is defined by:
\begin{align} \label{eq:div} &
\begin{array}{l| l} \displaystyle
\dive_\mesh:
 &  
\Hmeshzero \longrightarrow L_{\mesh,0}
\\[1ex] & \displaystyle \quad
\bfu \longmapsto \dive_\mesh\, (h \bfu) = \sum_{K\in\mesh} \dive_K(h \bfu) \characteristic_K, \text{ with } \dive_K(h \bfu) = \frac 1 {|K|} \sum_{\edge\in\edges(K)} F_{K,\edge}, 
\end{array}
\end{align}
where $F_{K,\edge}$ is the (conservative) numerical mass flux, defined by $F_{K,\edge} = |\edge|\ h_{\edge} u_{K,\edge}$ with   $u_{K,\edge} =  u_{i,\edge} \nKedge \cdot \bfe\ei  \mbox{ for } \edge \in \edgesint^{(i)},\ \iinud,$ while $h_{\edge}$ is approximated by the first order upwind scheme namely,  for $\edge = K|L \in \edges_{int}$, $h_{\edge} = h_K$ if $u_{K,\edge} \geq 0$ and $h_{\edge} = h_L$ otherwise.

\medskip
The discrete gradient operator applies to the pressure and the topography and is defined by:
\begin{equation*}\label{eq:grad}
\begin{array}{l|l}
\nabla_\edges:\quad
& \quad
L_\mesh \longrightarrow \Hmeshzero 
\\[1ex] & \displaystyle \quad
p \longmapsto \nabla_\edges p, 
\end{array}
\end{equation*}
with $\text{ for } \iinud$: 
\begin{equation} 
(\nabla_\edges p)_i  =   \sum_{\edge \in \edges_{int}^{(i)}} (\eth_i p)_{\sigma}  \characteristic_{D_\edge} \,\, \text{with} \text{ for } \edge = \overrightarrow{K|L},\, (\eth_i p)_{\sigma}= \frac{|\edge|}{|D_\edge|}\ (p_L - p_K).
\end{equation}
The above defined discrete divergence and gradient operators satisfy the following div-grad duality relationship \cite[Lemma 2.5]{gal-18-conv}:
\begin{equation*}
\text{for } p \in L_\mesh, \, \bfu \in\Hmeshzero, \quad  \int_\Omega p \,\dive_\mesh(\bfu) \dx + \int_\Omega \nabla_\edges p \cdot (\bfu) \dx = 0.
\end{equation*}

\medskip
{\it Discrete convection operator} -- The discrete nonlinear convection operator $\bfC_\edges(h \bfu) $ is linked to the discrete divergence operator on the dual mesh by the relation ${\bfC}_\edges (h \bfu)\bfu = \dive_{\edges}(h \bfu \otimes \bfu)$, where the full discrete convection operator $\bfC_\edges(h \bfu)$ is defined by: 
\[
\bfC_\edges (h \bfu)\, \bfu = \bigl(C_{\edges^{(1)}}(h \bfu)\, u_1,  C_{\edges^{(2)}}(h \bfu)\, u_2 \bigr),
\]
and the $i$-th component $C_\edgesi(h\bfu)$ of the convection operator is defined by:
\begin{equation}\label{eq:conv}
\begin{array}{l|l}
C_\edgesi(h \bfu): \quad
& \quad
\Hmeshizero \longrightarrow \Hmeshizero
\\ & \displaystyle \quad 
u_i \longmapsto C_\edgesi(h \bfu)\, u_i = \sum_{\edge \in \edgesinti} \dive_{\edges\ei}(h u_i \bfu)\; \characteristic_{D_\edge},
\\ & \displaystyle
\text{ with } \dive_{\edges\ei}(h u_i \bfu) = \frac 1{|D_\edge|} \sum_{\edged \in \edgesd^{(i)}(D_\edge)} F_{\edge,\edged} u_{i,\edged} ,
\end{array}
\end{equation}
where $u_{i,\edged}$ is approximated by the upwind technique with respect to the sign of $F_{\edge,\edged}$.
The quantity $ F_{\edge,\edged}$ is the numerical mass flux through $\edged$ outward $D_\edge$; it must be chosen carefully to ensure some stability properties of the scheme as in \cite{gal-18-conv, her-18-con}. 
Indeed we recall that in order to derive a discrete kinetic energy balance (Lemma \ref{lem:disc_kin} below), it is necessary that a discrete equation of the mass balance holds  in the dual mesh, namely:
\begin{equation}\label{mass_dual}
\dfrac{|D_\edge|}{\delta t}(h_{D_\edge}^{n+1} - h_{D_\edge}^{n}) +  \dive_{\edges}(h^n \bfu^n) = 0,\quad
\text{with}\quad |D_\edge|\ \dive_{\edges}(h^n \bfu^n) = \sum_{\edged \in \edgesd(D_\edge)} F^n_{\edge,\edged}.
\end{equation}
The water height $h_{D_\edge}$ and the flux $F_{\edge,\edged}$ are computed from the primal unknowns and fluxes so as to satisfy this latter relation thanks to the discrete mass balance on the primal mesh \eqref{sch-mass}.
For $\edge = K|L \in \edgesint$, the water height $h_{D_\edge}$ is defined as a weighted average between $h_K$ and $h_L$:
\begin{equation}\label{discete_water_dual}
|D_\edge|\ h_{D_\edge} = |D_{K,\edge}|\ h_K + |D_{L,\edge}|\ h_L,
\end{equation}
where $D_\edge$, $D_{K,\edge}$ and $D_{L, \edge}$ are defined in Definition \ref{def:MACgrid}.
\begin{figure}[tp] 
\centering
\includegraphics[scale=.7]{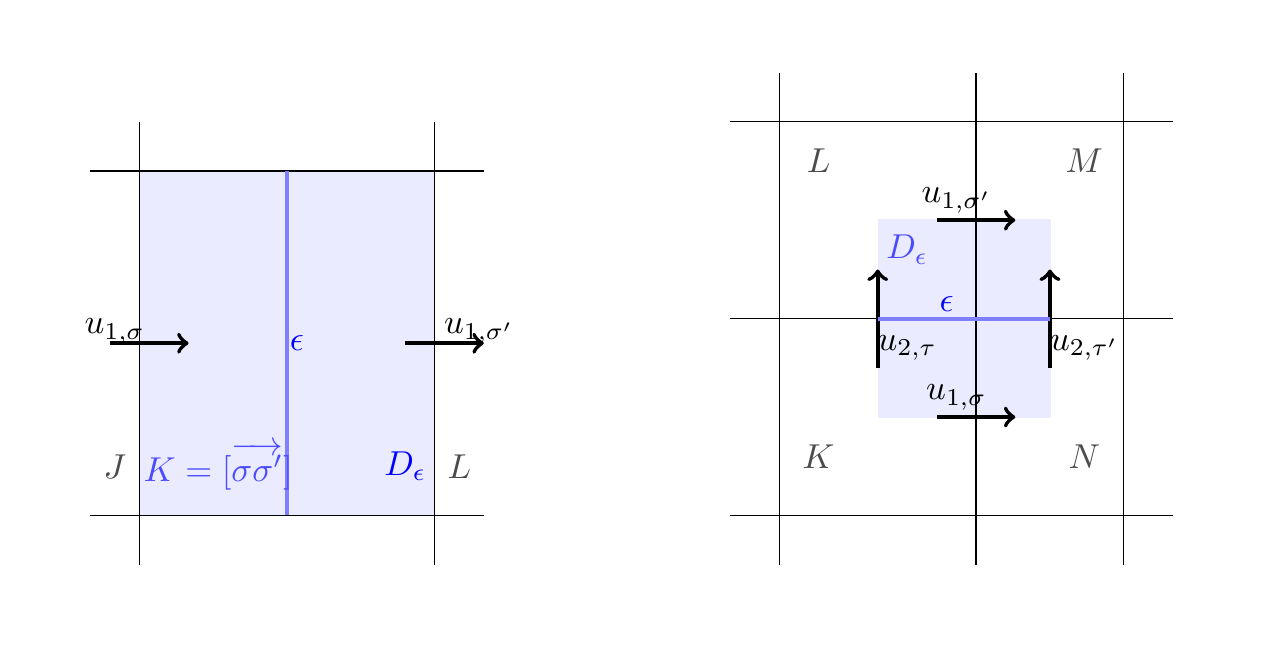}
\caption{Notations for the definition of the momentum flux on the dual mesh for the first component of the velocity- left: first case - right: second case.}
\label{fig:convection}
\end{figure}
The numerical flux  $F_{\edge,\edged}$ on the internal dual edges, is defined according to the location of the edges as follows:
\begin{list}{-}{\itemsep=0.ex \topsep=0.5ex \leftmargin=1.cm \labelwidth=0.7cm \labelsep=0.3cm \itemindent=0.cm}
\item First case -- The vector $\bfe\ei$ is normal to $\edged$, and $\edged$ is included in a primal cell $K$,  with $K = [\overrightarrow{\edge \edge'}]$ (see Definition \ref{def:MACgrid} and Figure \ref{fig:convection} on the left for $i=1$).
Then for a dual edge $\edged \in \edgesd\ei$ such that $\edged=\overrightarrow{\edge|\edge'}$, the flux $F_{\edge,\edged}$ through the edge $\edged$ is given by:
\begin{equation}\label{eq:flux_eK}
F_{\edge,\edged}= \frac 1 2  (F_{K,\edge'} - F_{K,\edge}) = \frac 1 2 |\edged|\ (h_{\edge}u_{i,\edge}  + h_{\edge'}u_{i,\edge'}),
\end{equation}
since $|\edge| = |\edge '| =|\edged| $. 

\smallskip
\item Second case -- The vector $\bfe\ei$ is tangent to $\edged$, and $\edged$ is the union of the halves of two primal edges $\edgeperp$ and $\edgeperp'$ such that $\edgeperp=\overrightarrow{K|L}$, $\edgeperp\in \edges(K)$ and $\edgeperp' = \overrightarrow{N|M} \in \edges(N)$ (see Definition \ref{def:MACgrid} and Figure \ref{fig:convection} on the right for $i=2$).
The flux numerical through $\edged$ is then given by:
\begin{equation}\label{eq:flux_eorth}
F_{\edge,\edged} = \frac 1 2\ (F_{K\edgeperp} + F_{L\edgeperp'}) = \frac 1 2\ (|\edgeperp|\ h_{\edgeperp} u_{,\edgeperp} + |\edgeperp'|\ h_{\edgeperp'}u_{i,\edgeperp'}).
\end{equation}
\end{list}
Note that the numerical momentum flux on a dual edge is conservative.
It is easy to check that the unknowns $h_{D_\edge}^{n}$ and $F^n_{\edge,\edged}$ thus defined satisfy the discrete dual mass balance \eqref{mass_dual}.

\medskip  
{\it Discrete water height on the dual mesh, for the topography term} -- In equation \eqref{sch-vitesse} the interpolation operator ${\bfI}_\edges$ is defined as the mean value of the water height:
\begin{equation} \label{hedge}
{\bfI}_\edges h = \sum_{\edge \in \edges_{int}} h_{\edge,c} \characteristic_{D_\edge} \mbox{ with } h_{\edge,c} =
\begin{cases} \frac{1}{2}(h_K+h_L) \text{ for } \edge = K|L \in \edges_{int},\\[1ex] h_K \text{ for }  \edge \in \edges_{ext} \cap \edges(K).\end{cases}
\end{equation}
This choice is important to preserve steady states, see Lemma \ref{lem:lake}.
%
%
\section{Properties of the scheme}

The scheme \eqref{scheme} enjoys some interesting properties, which we now state.
First of all, thanks to the upwind choice for $h^n$ in \eqref{eq:mass}, the positivity of the water height is preserved under a CFL like condition.

\medskip
\begin{lemma}[Positivity of the water height]\label{sha:lem:positivity}
Let $\nnud$, let $(h_K^n,\ u_{i,\edge}^n)_{K \in \mesh, \ \edge \in \edges^{(i)}}$  be given  and such that $h_K^n \ge 0$, for all  $K \in \mesh$, and let  $h_K^{n+1}$ be computed by \eqref{sch-mass}. 
Then $h_K^{n+1} \geq 0$, for all $K \in \mesh$ under the following CFL condition, 
\begin{equation}\label{cfl:posit:euler}
\delta t \leq \frac{|K|}{\displaystyle \sum_{\edge \in \edges(K)} |\edge|\,|\bfu^n_{K,\edge}|}.
\end{equation}
\end{lemma}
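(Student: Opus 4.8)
The plan is to write the discrete mass balance \eqref{sch-mass} explicitly cell by cell and exhibit $h_K^{n+1}$ as a convex-combination-type expression in the nonnegative quantities $(h_L^n)_{L}$, so that positivity follows termwise. First I would expand the scheme on a fixed cell $K$: multiplying \eqref{sch-mass} by $\delta t$ and using \eqref{eq:div}, I get
\begin{equation*}
h_K^{n+1} = h_K^n - \frac{\delta t}{|K|} \sum_{\edge \in \edges(K)} F^n_{K,\edge} = h_K^n - \frac{\delta t}{|K|} \sum_{\edge \in \edges(K)} |\edge|\, h_\edge^n\, u^n_{K,\edge}.
\end{equation*}

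The key step is to exploit the upwind definition of $h_\edge^n$. I would split the edges of $K$ into those where the flux leaves $K$ (i.e.\ $u^n_{K,\edge} \ge 0$, so that $h_\edge^n = h_K^n$) and those where it enters ($u^n_{K,\edge} < 0$, so that $h_\edge^n = h_L^n$ for the neighbouring cell $L$). On the incoming edges the contribution $-\tfrac{\delta t}{|K|}|\edge|\,h_L^n\,u^n_{K,\edge}$ is manifestly nonnegative, since $h_L^n \ge 0$ and $u^n_{K,\edge} < 0$. The only term that can be negative is the outgoing part, which involves $h_K^n$ itself; grouping it gives
\begin{equation*}
h_K^{n+1} = h_K^n \Bigl( 1 - \frac{\delta t}{|K|} \sum_{\edge \in \edges(K),\ u^n_{K,\edge} \ge 0} |\edge|\, u^n_{K,\edge} \Bigr) + (\text{nonnegative incoming terms}).
\end{equation*}

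It then suffices to show the bracketed factor is nonnegative. Bounding the restricted sum by the full sum of absolute values, $\sum_{\edge,\ u^n_{K,\edge}\ge 0} |\edge|\,u^n_{K,\edge} \le \sum_{\edge\in\edges(K)} |\edge|\,|u^n_{K,\edge}| = \sum_{\edge\in\edges(K)} |\edge|\,|\bfu^n_{K,\edge}|$, I see that the CFL condition \eqref{cfl:posit:euler} forces $\tfrac{\delta t}{|K|}\sum_{\edge,\ u^n_{K,\edge}\ge 0}|\edge|\,u^n_{K,\edge} \le 1$, so the coefficient of $h_K^n$ is in $[0,1]$. Since $h_K^n \ge 0$ by hypothesis, every term on the right is nonnegative, whence $h_K^{n+1} \ge 0$. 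As $K$ was arbitrary, this holds for all cells.

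I do not expect any genuine obstacle here: the argument is the standard monotonicity/convex-combination proof for an explicit upwind finite-volume mass update, and the whole content is the bookkeeping of signs via the upwind rule. The one point requiring a little care is making sure the normal-orientation convention in $u^n_{K,\edge} = u^n_{i,\edge}\,\nKedge\cdot\bfe\ei$ is handled consistently, so that ``$u^n_{K,\edge}\ge 0$'' really corresponds to flux leaving $K$; but this is a matter of notation rather than a difficulty.
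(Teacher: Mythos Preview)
Your argument is correct and is exactly the standard monotonicity proof for an explicit upwind mass update: split the edges of $K$ according to the sign of $u_{K,\edge}^n$, use the upwind rule to see that outgoing fluxes carry $h_K^n$ and incoming ones carry the neighbour value, and conclude via the CFL condition that the coefficient of $h_K^n$ lies in $[0,1]$ while all other contributions are nonnegative. The paper itself does not spell out a proof of this lemma (it merely states the result, relying on the upwind choice for $h$), so there is nothing further to compare; your write-up supplies precisely the expected justification.
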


\medskip
Second, thanks to the choice \eqref{hedge} for the reconstruction of the water height, the "lake at rest" steady state is preserved by the scheme. 

\begin{lemma}[Steady state "lake at rest"]\label{lem:lake}
Let $\nnud$, $C \in \xR_+$; let $\bfu^{n+1} \in \Hmeshzero$ and $h^{n+1}\in L_\mesh$ be a solution to \eqref{sch-mass}-\eqref{sch-vitesse} with $\bfu^n=0$ and $h^n + z = C$, where $C$ is a given real number.
Then $\bfu^{n+1}=0$ and $h^{n+1}+z=C$.   
\end{lemma}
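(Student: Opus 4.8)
The plan is to establish the two conclusions in turn: first I would use the mass balance \eqref{sch-mass} to get $h^{n+1}+z=C$, and then substitute this into the momentum balance \eqref{sch-vitesse} to force $\bfu^{n+1}=0$. The core of the argument is a single exact cancellation between the discrete pressure gradient and the discrete topography term; this is precisely the ``well-balanced'' property that the mean-value reconstruction \eqref{hedge} was designed to produce, and it is where the whole statement lives.

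First I would examine the mass equation. Since $\bfu^n=0$, every numerical mass flux vanishes: $F_{K,\edge}^n=|\edge|\,h_\edge^n u_{K,\edge}^n=0$, because $u_{K,\edge}^n=u_{i,\edge}^n\,\nKedge\cdot\bfe\ei$ with $u_{i,\edge}^n=0$. Hence $\dive_K(h^n\bfu^n)=0$ for every $K\in\mesh$, and \eqref{sch-mass} reduces to $\eth_t h^{n+1}=0$, i.e.\ $h_K^{n+1}=h_K^n$ for all $K$. Reading the hypothesis $h^n+z=C$ at the discrete level as $h_K^n+z_K=C$, this already yields $h^{n+1}+z=C$, the second assertion, together with $h_K^{n+1}=C-z_K$. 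Via \eqref{discete_water_dual} this also gives the dual heights $h_{D_\edge}^{n+1}$ in terms of $h_K^{n+1}$ and $h_L^{n+1}$.

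Next I would turn to the momentum balance on an internal edge $\edge=\overrightarrow{K|L}\in\edgesinti$. Using $\bfu^n=0$, the discrete time derivative collapses to $\tfrac1{\delta t}h_{D_\edge}^{n+1}u_{i,\edge}^{n+1}$, and the convection term $\bfC_\edges(h^n\bfu^n)\bfu^n$ vanishes identically, since by \eqref{eq:conv}, \eqref{eq:flux_eK} and \eqref{eq:flux_eorth} every dual flux $F_{\edge,\edged}^n$ and every upwind value $u_{i,\edged}^n$ is built from $\bfu^n=0$. Thus \eqref{sch-vitesse} reduces to
\begin{equation*}
\frac1{\delta t}\,h_{D_\edge}^{n+1}\,u_{i,\edge}^{n+1} + (\eth_i p^{n+1})_\edge + g\,h_{\edge,c}^{n+1}\,(\eth_i z)_\edge = 0.
\end{equation*}
The key step is to show that the last two terms cancel. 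Using $p^{n+1}=\tfrac12 g(h^{n+1})^2$, the factorisation $(h_L^{n+1})^2-(h_K^{n+1})^2=(h_K^{n+1}+h_L^{n+1})(h_L^{n+1}-h_K^{n+1})$ and the identity $h_L^{n+1}-h_K^{n+1}=z_K-z_L$ (a direct consequence of $h_K^{n+1}=C-z_K$), I obtain
\begin{equation*}
(\eth_i p^{n+1})_\edge = -\frac{|\edge|}{2|D_\edge|}\,g\,(h_K^{n+1}+h_L^{n+1})\,(z_L-z_K),
\end{equation*}
while the arithmetic-mean choice $h_{\edge,c}^{n+1}=\tfrac12(h_K^{n+1}+h_L^{n+1})$ in \eqref{hedge} gives exactly $g\,h_{\edge,c}^{n+1}(\eth_i z)_\edge=+\tfrac{|\edge|}{2|D_\edge|}g(h_K^{n+1}+h_L^{n+1})(z_L-z_K)$. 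The two contributions are opposite, so their sum is zero.

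Finally, the reduced momentum equation becomes $h_{D_\edge}^{n+1}u_{i,\edge}^{n+1}=0$ on each internal edge, while $u_{i,\edge}^{n+1}=0$ on boundary edges by definition of $\Hmeshzero$. In the genuinely wet situation $h_{D_\edge}^{n+1}>0$ (which holds for instance when $C>\max_K z_K$, so that each $h_K^{n+1}=C-z_K>0$ and hence $h_{D_\edge}^{n+1}>0$ by \eqref{discete_water_dual}), this forces $u_{i,\edge}^{n+1}=0$ for all $\edge$ and $\iinud$, i.e.\ $\bfu^{n+1}=0$. I expect the main obstacle to be nothing computational but rather the exact algebraic cancellation of the previous paragraph: it is what singles out the arithmetic mean \eqref{hedge} as the correct reconstruction of $h$ in the topography term, since any other interpolation would leave a nonzero residual and hence a spurious velocity. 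The only genuine caveat is the degenerate dry case $h_{D_\edge}^{n+1}=0$, where the scheme fixes the momentum but not the velocity, and where one simply takes $u_{i,\edge}^{n+1}=0$ by convention.
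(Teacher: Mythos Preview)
Your proof is correct and follows essentially the same approach as the paper: the paper also uses $\bfu^n=0$ in the mass balance to get $h^{n+1}=h^n$, then factors $(h_L^{n+1})^2-(h_K^{n+1})^2=2\,h_{\edge,c}^{n+1}(h_L^{n+1}-h_K^{n+1})$ so that the pressure gradient and topography term combine into $g\,{\bfI}_\edges h^{n+1}\,\nabla_\edges(h^{n+1}+z)=0$. Your explicit two-term cancellation is just an unpacked version of this same identity, and your remark on the dry case $h_{D_\edge}^{n+1}=0$ is a careful addition that the paper leaves implicit.
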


\medskip
As a consequence of the careful discretisation of the convection term, the scheme satisfies a discrete kinetic energy balance, as stated in the following lemma. 
The proof of this result is an easy adaptation of \cite[Lemma 3.2]{her-13-exp}.

\begin{lemma}[Discrete kinetic balance]\label{lem:disc_kin}
A solution to the scheme \eqref{scheme} satisfies the following equality, for $i=1,2$, $\edge \in \edges^{(i)}$ and $0 \leq n \leq N-1$:
\begin{multline}\label{disc-kinet}
\dfrac 1 {2\,\delta t} (h_{D_\edge}^{n+1} (u^{n+1}_{i,\edge})^2 - h_{D_\edge}^n (u^n_{i,\edge})^2)
+ \dfrac{1}{2\ |D_\edge|} \sum_{\edged \in \edgesd^{(i)}(D_\edge)} F^n_{\edge,\edged} (u^n_{i,\edged})^2
\\
+ u_{i,\edge}^{n+1} (\eth_i p^{n+1})_\edge 
+ g\, h_{\edge,c}^{n+1} \, u_{i,\edge}^{n+1} (\eth_i z)_\edge = - R_{i,\edge}^{n+1},
\end{multline}
with $R_{i, \edge}^{n+1} \geq 0$ under the CFL like restriction:
\begin{equation}\label{cfl:positif:energy}
\forall \edge \in \edges^{(i)}, \qquad \delta t \leq \dfrac{|D_\edge| \, h^{n+1}_{D_\edge}}{ \displaystyle \sum_{\edged \in \edgesd(D_\edge)} (F_{\edge,\edged}^n)^{-} }.
\end{equation} 
\end{lemma}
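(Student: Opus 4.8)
The plan is to derive the balance \eqref{disc-kinet} directly from the momentum update \eqref{sch-vitesse}, by testing its $i$-th component against the new velocity $u_{i,\edge}^{n+1}$ on each dual cell $D_\edge$, and then to reorganise the resulting terms so that all the non–signed contributions are collected into $R_{i,\edge}^{n+1}$. The pressure and topography contributions need no work: multiplying by $u_{i,\edge}^{n+1}$ reproduces verbatim the two terms $u_{i,\edge}^{n+1}(\eth_i p^{n+1})_\edge$ and $g\,h_{\edge,c}^{n+1}u_{i,\edge}^{n+1}(\eth_i z)_\edge$ of \eqref{disc-kinet}. The real content is in the discrete time derivative and the convection term.

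For the time derivative, writing $h=h_{D_\edge}$ and $u=u_{i,\edge}$ for brevity, I would use the elementary identity
\[
u^{n+1}(h^{n+1}u^{n+1} - h^n u^n) = \frac12\big(h^{n+1}(u^{n+1})^2 - h^n(u^n)^2\big) + \frac{h^n}{2}(u^{n+1}-u^n)^2 + \frac{(u^{n+1})^2}{2}(h^{n+1}-h^n),
\]
which splits $u_{i,\edge}^{n+1}\,\eth_t(hu_i)^{n+1}$ into the wanted discrete time derivative of the kinetic energy, a manifestly nonnegative term $\frac{h^n}{2\deltat}(u^{n+1}-u^n)^2$, and a leftover proportional to $h^{n+1}-h^n$. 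The key step is to combine this leftover with the convection term using the dual mass balance \eqref{mass_dual}, which replaces $h^{n+1}-h^n$ by $-\frac{\deltat}{|D_\edge|}\sum_\edged F_{\edge,\edged}^n$. Invoking the per–flux identity $u^{n+1}u_{i,\edged}^n-\frac12(u^{n+1})^2-\frac12(u_{i,\edged}^n)^2=-\frac12(u^{n+1}-u_{i,\edged}^n)^2$, the sum of these two terms becomes exactly the kinetic energy flux $\frac1{2|D_\edge|}\sum_\edged F_{\edge,\edged}^n (u_{i,\edged}^n)^2$ of \eqref{disc-kinet}, minus $\frac1{2|D_\edge|}\sum_\edged F_{\edge,\edged}^n(u^{n+1}-u_{i,\edged}^n)^2$. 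Collecting everything yields \eqref{disc-kinet} with
\[
R_{i,\edge}^{n+1} = \frac{h_{D_\edge}^n}{2\deltat}(u_{i,\edge}^{n+1}-u_{i,\edge}^n)^2 - \frac{1}{2|D_\edge|}\sum_{\edged\in\edgesd^{(i)}(D_\edge)} F_{\edge,\edged}^n (u_{i,\edge}^{n+1}-u_{i,\edged}^n)^2.
\]

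It remains to prove $R_{i,\edge}^{n+1}\ge 0$ under \eqref{cfl:positif:energy}, which I regard as the delicate point. Here I would exploit the upwind choice of $u_{i,\edged}^n$: on every dual edge with $F_{\edge,\edged}^n\ge 0$ one has $u_{i,\edged}^n=u_{i,\edge}^n$, so the factor reduces to $(u_{i,\edge}^{n+1}-u_{i,\edge}^n)^2$, while each edge with $F_{\edge,\edged}^n<0$ contributes a nonnegative amount $\frac1{2|D_\edge|}(F_{\edge,\edged}^n)^{-}(u_{i,\edge}^{n+1}-u_{i,\edged}^n)^2$ that can simply be dropped. This gives the lower bound
\[
R_{i,\edge}^{n+1}\ge \frac12(u_{i,\edge}^{n+1}-u_{i,\edge}^n)^2\,\big(\tfrac{h_{D_\edge}^n}{\deltat} - \tfrac{1}{|D_\edge|}\textstyle\sum_{\edged}(F_{\edge,\edged}^n)^{+}\big).
\]

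The final, most easily overlooked, step is to rewrite this coefficient. Applying the dual mass balance once more in the form $\sum_{\edged}(F_{\edge,\edged}^n)^{+} - \sum_{\edged}(F_{\edge,\edged}^n)^{-} = \frac{|D_\edge|}{\deltat}(h_{D_\edge}^n - h_{D_\edge}^{n+1})$ turns the coefficient into $\frac{h_{D_\edge}^{n+1}}{\deltat} - \frac{1}{|D_\edge|}\sum_{\edged}(F_{\edge,\edged}^n)^{-}$, which is exactly the quantity controlled by \eqref{cfl:positif:energy}; hence $R_{i,\edge}^{n+1}\ge 0$. The main obstacle is thus not any single estimate but bookkeeping of the time levels $n$ and $n+1$: the bound arises naturally in terms of $h^n$ and the \emph{outgoing} fluxes $(F)^{+}$, and it must be converted, through the dual mass balance, into the stated restriction expressed with $h^{n+1}$ and the \emph{incoming} fluxes $(F)^{-}$. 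The positivity of the weights in \eqref{discete_water_dual}, together with Lemma \ref{sha:lem:positivity}, guarantees $h_{D_\edge}^{n+1}\ge 0$, so the right-hand side of \eqref{cfl:positif:energy} is meaningful.
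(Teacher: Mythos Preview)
Your derivation is correct and is exactly the standard argument. The paper does not spell out a proof but refers to \cite[Lemma~3.2]{her-13-exp}; the computation there proceeds precisely as you do: multiply the $i$-th component of \eqref{sch-vitesse} by $u_{i,\edge}^{n+1}$, use the algebraic identity for the time-derivative term, invoke the dual mass balance \eqref{mass_dual} to merge the $(h^{n+1}-h^n)$ leftover with the convection sum, read off $R_{i,\edge}^{n+1}$, exploit upwinding to isolate the outgoing fluxes, and finally apply \eqref{mass_dual} once more to rewrite the coefficient as $\tfrac{h_{D_\edge}^{n+1}}{\deltat}-\tfrac{1}{|D_\edge|}\sum_\edged (F_{\edge,\edged}^n)^{-}$.
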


The scheme also satisfies the following potential energy balance \cite[Lemma 3.3]{her-13-exp}.

\begin{lemma}[Discrete elastic potential balance]
Let, for $K \in \mesh$ and $0 \leq n \leq N$ the potential energy be defined by $(E_p)_K^n=\frac 1 2 g\,(h_K^n)^2$.
A solution to the scheme \eqref{scheme} satisfies the following equality, for $K \in \mesh$ and $0 \leq n \leq N-1$:
\begin{equation}\label{disc-pot}
\eth_t E_p^{n+1} + \dive_K(E_p^n \bfu^n ) + p_K^n \dive_K (\bfu^n) = - R_K^{n+1},
\end{equation}
with
\begin{equation}\label{residu:pot:bound}
R_K^{n+1} \geq \dfrac{1}{|K|} g
\sum_{\edge \in \edges(K)} |\edge|\ u_{K,\edge}^n h^n_{\edge} (h^{n+1}_{K} - h_K^n).
\end{equation}
\end{lemma}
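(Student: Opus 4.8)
The plan is to reproduce at the discrete level the continuous computation leading to \eqref{pot_bal}, namely the multiplication of the mass equation by $g h$. Accordingly I would start from the local form of the discrete mass balance \eqref{sch-mass} on a cell $K$,
\[
\frac 1{\delta t}(h_K^{n+1}-h_K^n) + \dive_K(h^n\bfu^n) = 0, \qquad \dive_K(h^n\bfu^n) = \frac 1{|K|}\sum_{\edge\in\edgesK} F_{K,\edge}^n,
\]
and multiply it by $g\,h_K^n$. The discrete time term is treated with the elementary identity $\tfrac12(a^2-b^2)=b(a-b)+\tfrac12(a-b)^2$ applied with $a=h_K^{n+1}$, $b=h_K^n$, which gives
\[
\frac{g}{\delta t}\,h_K^n(h_K^{n+1}-h_K^n) = \eth_t E_p^{n+1} - \frac{g}{2\,\delta t}(h_K^{n+1}-h_K^n)^2 ,
\]
so that, replacing $\tfrac1{\delta t}(h_K^{n+1}-h_K^n)$ by $-\dive_K(h^n\bfu^n)$ through the mass balance, we obtain $\eth_t E_p^{n+1} = -g\,h_K^n\dive_K(h^n\bfu^n) + \tfrac{g}{2\delta t}(h_K^{n+1}-h_K^n)^2$.

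Next I would recognise in the convective term $-g\,h_K^n\dive_K(h^n\bfu^n)=-\tfrac{g}{|K|}\sum_\edge|\edge|\,h_{\edge}^n h_K^n\,u_{K,\edge}^n$ the discrete potential-energy flux and the $p\dive\bfu$ contribution. Since $E_p=p=\tfrac12 g h^2$, the conservative flux $\dive_K(E_p^n\bfu^n)$ is built, like the mass flux, with the upwind face value $h_{\edge}^n$, i.e.\ its face contribution is $|\edge|\,\tfrac12 g(h_{\edge}^n)^2\,u_{K,\edge}^n$. Using $h_{\edge}^n h_K^n=\tfrac12\big((h_{\edge}^n)^2+(h_K^n)^2\big)-\tfrac12(h_{\edge}^n-h_K^n)^2$ on each face, this decomposes as
\[
-g\,h_K^n\dive_K(h^n\bfu^n) = -\dive_K(E_p^n\bfu^n) - p_K^n\dive_K(\bfu^n) + \frac{g}{2|K|}\sum_{\edge\in\edgesK} |\edge|\,u_{K,\edge}^n\,(h_{\edge}^n-h_K^n)^2 .
\]
Combining the two computations yields \eqref{disc-pot} with the explicit residual
\[
R_K^{n+1} = -\frac{g}{2\,\delta t}(h_K^{n+1}-h_K^n)^2 - \frac{g}{2|K|}\sum_{\edge\in\edgesK} |\edge|\,u_{K,\edge}^n\,(h_{\edge}^n-h_K^n)^2 .
\]

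Finally I would establish the lower bound \eqref{residu:pot:bound}. The decisive point is the upwind choice of $h_{\edge}^n$: if $u_{K,\edge}^n\ge 0$ then $h_{\edge}^n=h_K^n$ and the face term vanishes, while if $u_{K,\edge}^n<0$ then $|\edge|\,u_{K,\edge}^n<0$; hence $|\edge|\,u_{K,\edge}^n(h_{\edge}^n-h_K^n)^2\le 0$ in all cases, so the second sum in $R_K^{n+1}$ is nonnegative. To match the stated right-hand side I would invoke the mass balance once more, which collapses the target quantity to a clean square, $\tfrac{g}{|K|}\sum_{\edge\in\edgesK}|\edge|\,u_{K,\edge}^n h_{\edge}^n(h_K^{n+1}-h_K^n) = g(h_K^{n+1}-h_K^n)\dive_K(h^n\bfu^n)= -\tfrac{g}{\delta t}(h_K^{n+1}-h_K^n)^2$, so that \eqref{residu:pot:bound} is exactly $R_K^{n+1}\ge -\tfrac{g}{\delta t}(h_K^{n+1}-h_K^n)^2$. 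This then follows immediately from
\[
R_K^{n+1}+\frac{g}{\delta t}(h_K^{n+1}-h_K^n)^2 = \frac{g}{2\,\delta t}(h_K^{n+1}-h_K^n)^2 - \frac{g}{2|K|}\sum_{\edge\in\edgesK}|\edge|\,u_{K,\edge}^n(h_{\edge}^n-h_K^n)^2 \ge 0 .
\]
I expect the main difficulty to be bookkeeping rather than conceptual: carrying the two square-completion remainders with the correct signs, and above all noticing that the seemingly unrelated bound in \eqref{residu:pot:bound} reduces, via the mass balance, to the quadratic $-\tfrac{g}{\delta t}(h_K^{n+1}-h_K^n)^2$, after which the upwind sign argument closes the estimate.
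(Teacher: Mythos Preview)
Your argument is correct and is precisely the computation underlying \cite[Lemma 3.3]{her-13-exp}, which the paper cites without reproducing: multiply the discrete mass balance by $g\,h_K^n$, complete the square in time, split $h_\edge^n h_K^n$ on each face to extract the upwind potential-energy flux and the $p_K^n\,\dive_K(\bfu^n)$ term, and exploit the sign property of the upwind choice. Your final observation---that the right-hand side of \eqref{residu:pot:bound} collapses via the mass balance to $-\tfrac{g}{\delta t}(h_K^{n+1}-h_K^n)^2$, making the inequality immediate---is exactly the intended reading of the bound.
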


Note that the right-hand side of Equation \eqref{residu:pot:bound} may be negative, and thus the quantities $R_K^{n+1}$ also.
This is specific to explicit schemes (for implicit or pressure-correction schemes  \cite{her-14-imp}, this residual is non-negative) and prevents  getting a stability estimate for the scheme.
However, combining the two previous lemmas allows to prove that convergent sequences of solutions to the scheme satisfy an entropy inequality, as depicted in the next section.
To this purpose, we will pass to the limit in a discrete entropy balance which is built as follows.
Let $K \in \mesh$ and let us denote by $(E_k)_K^n$ the following quantity, which may be seen as a kinetic energy associated to $K$:
\[
(E_k)_K^n= \frac 1 {4\ |K|} \sum_{i=1}^2 \sum_{\edge \in \edges(K)\cap \edges^{(i)}} |D_\edge|\ h_{D_\edge}^n (u^n_{i,\edge})^2.
\]
Then, for $\edge_0 \in \edges(K)$, we define a kinetic energy flux, which we denote by $G_{K,\edge_0}^n$, as follows.
Let us suppose, for instance, that $\edge_0 \in \edges^{(1)}$.
We denote by $\edged$ the face of $D_{\edge_0}$ parallel to $\edge_0$ and included in $K$ and by $\edged'$ the opposite face of $D_{\edge_0}$.
In addition, $\edge_0$ is the union of two half-faces of the dual mesh associated to the second component of the velocity, which we denote by $\tau$ and $\tau'$, and we denote by $\edge$ and $\edge'$ the two faces of $K$ belonging to $\edges^{(2)}$ such that $\tau \in \edgesd(D_\edge)$ and $\tau' \in \edgesd(D_{\edge'})$.
We then have:
\[
G_{K,\edge_0}^n = \frac 1 4 \Bigl[-F^n_{\edge_0,\edged}\, (u^n_{1,\edged})^2 + F^n_{\edge_0,\edged'}\, (u^n_{1,\edged'})^2
+ F^n_{\edge,\tau}\, (u^n_{2,\tau})^2 + F^n_{\edge,\tau'}\, (u^n_{2,\tau'})^2 \Bigr] 
\]
Multiplying the kinetic energy balance equation \eqref{disc-kinet} associated to each face $\edge$ of $K$ by $\frac 1 2\ |D_\edge|$ and summing the four obtained relations with \eqref{disc-pot}, we get
\begin{multline*}
\frac{|K|}{\delta t}\ \bigl[(E_k)_K^{n+1} +(E_p)_K^{n+1} - (E_k)_K^n-(E_p)_K^n\bigr]
+ \sum_{\edge \in \edges(K)} \bigl[ G_{K,\edge}^n + F_{K,\edge}^n\ (E_p)_\edge^n \bigr]
\\
+ \sum_{\edge \in \edges(K),\ \edge=K|L} |\edge|\ \frac 1 2\ (p^{n+1}_L-p^{n+1}_K)\ u_{K,\edge}^{n+1}
+ \sum_{\edge \in \edges(K),\ \edge=K|L} |\edge|\ \frac 1 4 g\ (h_K^{n+1}+h_L^{n+1})\ (z_L-z_K)\,u_{K,\edge}^{n+1}
= - T_K^{n+1},
\end{multline*}
where $T_K^{n+1}$ collects the residual terms in \eqref{disc-kinet} and \eqref{disc-pot}, and thus $T_K^{n+1} \geq R_K^{n+1}$.
We now remark that, thanks to the discrete mass balance equation and the fact that the topography does not depend on time,
\[
\frac 1 2 \sum_{\edge \in \edges(K)} F_{K,\edge}^n (z_L-z_K) =
\frac{|K|}{\delta t}\ (h_K^{n+1} z_K - h_K^n z_K) + \frac 1 2 \sum_{\edge \in \edges(K),\ \edge=K|L} F_{K,\edge}^n\ (z_K+z_L),
\]
and we finally obtain the following discrete entropy balance:
\begin{multline}\label{eq:ent_disc}
\frac{|K|}{\delta t}\ \bigl[(E_k)_K^{n+1} +(E_p)_K^{n+1} + g\,h_K^{n+1}\,z_K - (E_k)_K^n-(E_p)_K^{n} - g\,h_K^{n}\,z_K\bigr]
\\
+ \sum_{\edge \in \edges(K)} \bigl[ G_{K,\edge}^n + F_{K,\edge}^n\ (E_p)_\edge^n + \frac 1 2\, F_{K,\edge}^n (z_K+z_L) \bigr]
\\ + \sum_{\edge \in \edges(K),\ \edge=K|L} |\edge|\ \frac 1 2\ (p^{n+1}_K+p^{n+1}_L)\ u_{K,\edge}^{n+1}
= -(R_e)_K^{n+1},
\end{multline}
with
\begin{multline}
(R_e)_K^{n+1} \geq  T_K^{n+1} + g \sum_{\edge \in \edges(K)} \bigl[ \frac 1 2 F_{K,\edge}^n -\frac 1 4 |\edge|\ (h^{n+1}_K+h^{n+1}_L)\ u_{K,\edge}^{n+1} \bigr]\ (z_L-z_K) \\ + \sum_{\edge \in \edges(K),\ \edge=K|L} |\edge|\ \frac 1 2\ (p^{n+1}_K u_{K,\edge}^{n+1}-p^n_K u_{K,\edge}^n) .
\end{multline}
%
%
\section{Consistency analysis}\label{sec:cons}

The objective of this section is to show that the schemes are consistent in the Lax-Wendroff sense, namely that if a sequence of solutions is controlled in suitable norms and converges to a limit, this latter necessarily satisfies a weak formulation of the continuous problem. 

\medskip
A weak solution to the continuous problem satisfies, for any $\varphi \in C^\infty_c \bigl(\Omega \times [0,T)\bigr)$ ($\bfvarphi \in C^\infty_c \bigl(\Omega \times [0,T)\bigr)^2$): 
\begin{subequations} \begin{align}\label{sha:eq:pbw_mass} &
\int_0^T \int_\Omega \Bigl[ h \, \partial_t \varphi + h\, u \, \cdot \nabla \varphi \Bigr]\dx \dt
+\int_\Omega h_0(\bfx) \, \varphi(\bfx,0) \dx= 0,
\\ \label{sha:eq:pbw_mom} &
-\int_0^T \int_\Omega \Bigl[ h\, \bfu \cdot \partial_t \bfvarphi + (h \bfu \otimes \bfu):\bfvarphi + \frac 1 2\ g \, h^{2} \dive(\bfvarphi)  + g \, h \, \nabla(z) \bfvarphi \Bigr] \dx \dt 
\\ &\hspace{6cm} \nonumber 
-\int_\Omega h_0(\bfx) \, \bfu_0(\bfx)\cdot \varphi(\bfx,0) \dx = 0. 
\end{align}\label{sha:eq:pbw} 
\end{subequations}
This system is supplemented with a weak entropy inequality, for any nonnegative test functions $\varphi \in C^\infty_c \bigl(\Omega \times [0,T), \mathbb R_+\bigr)$ : 
\begin{equation}\label{eq:weakentropy}
-\int_0^T \int_\Omega \Bigl[\eta \, \partial_t \varphi + \bf{\Phi} \cdot \nabla \varphi \Bigr] \dx \dt 
-\int_\Omega \eta_0(\bfx)\, \varphi(\bfx,0) \dx \leq 0,
\end{equation}
with $\eta$ and $\bf{\Phi}$ defined by \eqref{def:entropy}.

\medskip
Before stating the global weak consistency of the scheme \eqref{scheme}, some definitions and  estimate assumptions are needed. 

\medskip
Let $(\mesh^{(m)},\edges^{(m)})_\mnn$ be a sequence of meshes in the sense of Definition \ref{def:MACgrid} and let ($h^{(m)}$ $\bfu^{(m)})_\mnn$  be the associated sequence of solutions of the scheme (\ref{scheme})).

\medskip
\textbf{Assumed estimates }- We need also some a priori estimates on the sequence of discrete solutions $\displaystyle (h\m, \ \bfu\m)_{m \in \xN}$ in order to prove the consistency result we are seeking.
First of all we assume that $h\m > 0$, $\forall m \in \xN$ which can be obtained under the CFL condition \eqref{cfl:posit:euler}.
Furthermore:
\begin{list}{--}{\itemsep=0.ex \topsep=0.5ex \leftmargin=1.cm \labelwidth=0.7cm \labelsep=0.3cm \itemindent=0.cm}
\item  The water height $h\m$ and its inverse are uniformly bounded in $\xL^\infty(\Omega \times (0,T))$, \ie there exists some constants $C, C' \in \xR_+^\ast$ such that for $m \in \xN$ and $0 \leq n < N\m$:
\begin{equation}\label{infini:boud:h}
1/C < (h\m)^n_K\leq C, \quad 1/C' < 1/( h\m)^n_K\leq C' \quad \forall K \in \mesh\m
\end{equation} 
\item The velocity $\bfu\m$ is also uniformly bounded in $\xL^\infty(\Omega \times (0,T))^2$: 
\begin{equation}\label{infini:bound:u}
|(\bfu\m)^n_\edge| \leq C, \quad \forall \edge \in \edges\m.
\end{equation}
\end{list}
Finally, the weak consistency to the entropy inequality is only proved under additional assumptions.
First we need the following condition on the space and time steps, which is stronger than a CFL condition:
\begin{equation}\label{eq:ent1}
\frac {\delta t^{(m)}}{\delta_{\mesh^{(m)}}} \to 0 \mbox{ as } m\to +\infty
\end{equation}
Second, the $L^1(\Omega,BV)$ norm of the height is required to be bounded, \ie\ there exists one constant $C$ such that,  for $m \in \xN$,
\begin{equation}\label{eq:ent2}
\sum_{n=0}^{N-1} \sum_{K\in\mesh} |K|\ |(h\m)_K^{n+1} - (h\m)_K^n| \leq C.
\end{equation}

We are now in position to state the following consistency result.

\begin{thm}[Weak consistency of the scheme]\label{Euler:weak:const}
Let $(\mesh^{(m)},\edges^{(m)})_\mnn$ be a sequence of meshes such that $\delta t^{(m)}$ and $\delta_{\mesh^{(m)}} \to 0$ as $m \to +\infty$~; assume that there exists $\eta >0$ such that $\eta_{\mesh^{(m)}} \le \eta$ for any $m\in \xN$ (with $\eta_{\mesh^{(m)}} $ defined by \eqref{regmesh}); assume moreover that (\ref{infini:boud:h}) and (\ref{infini:bound:u}) hold.
Let $(h^{(m)},\bfu^{(m)})_\mnn$ be a sequence of solutions to the scheme \eqref{scheme} converging to $(\bar h, \bar \bfu)$ in $L^1(\Omega \times (0,T)) \times L^1(\Omega \times (0,T))^2$. 
Then $(\bar h, \bar \bfu)$ satisfies the weak formulation \eqref{sha:eq:pbw} of the shallow water equations. 

If we furthermore assume the space and time steps satisfy \eqref{eq:ent1} and that the sequence of heights is uniformly bounded in $L^1(\Omega,BV)$, \ie\ satisfy \eqref{eq:ent2}, then $(\bar h, \bar \bfu)$ satisfies the entropy inequality \eqref{eq:weakentropy}.
\end{thm}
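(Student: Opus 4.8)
The plan is to follow the Lax--Wendroff methodology. For each of the three target relations I multiply the relevant discrete balance by the value at $(\bfx_K,t_n)$ of a fixed test function, sum over the cells and time levels, perform discrete integrations by parts (Abel summation) to move the finite differences onto the smooth test function, and then let $m\to+\infty$. Throughout, the uniform $L^\infty$ bounds \eqref{infini:boud:h}--\eqref{infini:bound:u} provide the integrability needed to combine the $L^1$ convergence of $(h\m,\bfu\m)$ toward $(\bar h,\bar\bfu)$ with the pointwise convergence of the discrete derivatives of $\varphi$ and $\bfvarphi$ to their continuous counterparts, while the bound $\eta_{\mesh\m}\le\eta$ keeps primal and dual measures comparable, so that every geometric weight scales like a power of $\delta_{\mesh\m}$ with a constant depending only on $\eta$.

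For the mass equation \eqref{sha:eq:pbw_mass} I test \eqref{sch-mass} with $\varphi(\bfx_K,t_n)$ weighted by $|K|\,\delta t^{(m)}$ and sum. Abel summation in time yields the term converging to $\int_0^T\!\int_\Omega h\,\partial_t\varphi\dx\dt$ together with the initial term $\int_\Omega h_0\,\varphi(\cdot,0)\dx$ (using $h^0=\mathcal{P}_\mesh h_0$), and reorganising the conservative fluxes $F_{K,\edge}$ face by face produces, after a space integration by parts, the term converging to $\int_0^T\!\int_\Omega h\,\bfu\cdot\nabla\varphi\dx\dt$; the upwind choice of $h_\edge$ contributes only a remainder bounded by $\|\nabla\varphi\|_\infty$ times $\delta_{\mesh\m}$ and the $L^\infty$ data, hence vanishing. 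The momentum equation \eqref{sha:eq:pbw_mom} is handled componentwise on the dual meshes, testing \eqref{sch-vitesse} against the dual interpolate of $\bfvarphi$: the convection operator $\bfC_\edges$ is treated through its conservative dual fluxes \eqref{eq:flux_eK}--\eqref{eq:flux_eorth}, whose weak consistency with $h\bfu\otimes\bfu$ is obtained by the technique of \cite{gal-18-conv,her-18-con}; the pressure term is converted by the div--grad duality recalled after \eqref{eq:grad} into the term converging to $\frac{1}{2}g\int_0^T\!\int_\Omega h^2\dive(\bfvarphi)\dx\dt$; and the interpolation \eqref{hedge} makes the topography term converge to $g\int_0^T\!\int_\Omega h\,\nabla z\cdot\bfvarphi\dx\dt$.

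For \eqref{eq:weakentropy} I start from the discrete local entropy balance \eqref{eq:ent_disc}, multiply it by $|K|\,\delta t^{(m)}\,\varphi(\bfx_K,t_n)\ge 0$ and sum over $K$ and $n$. The same summation-by-parts manipulations send the left-hand side to $-\int_0^T\!\int_\Omega[\eta\,\partial_t\varphi+\Phi\cdot\nabla\varphi]\dx\dt-\int_\Omega\eta_0\,\varphi(\cdot,0)\dx$, with $(\eta,\Phi)$ as in \eqref{def:entropy}; call this limit $V$. Writing $B_m$ for the right-hand contribution $\sum_n\delta t^{(m)}\sum_K|K|\,\varphi_K^n\,(R_e)_K^{n+1}$, the discrete identity gives $B_m\to -V$, so it suffices to prove $\liminf B_m\ge 0$. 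Since $\varphi\ge0$, the lower bound on $(R_e)_K^{n+1}$ transfers to $B_m$: the kinetic residuals dominated by the nonnegative $R_{i,\edge}^{n+1}$ of Lemma \ref{lem:disc_kin} give a nonnegative part; the topography and pressure--velocity corrections carry a factor $z_L-z_K=O(\delta_{\mesh\m})$ or telescope in time against the smooth $\varphi$, hence tend to $0$; finally the potential-energy residual, whose sign is not controlled, is bounded from below via \eqref{residu:pot:bound}, and after the $L^\infty$ bounds and the scaling $\sum_{\edge\in\edges(K)}|\edge|\le C_\eta\,|K|/\delta_{\mesh\m}$ its contribution is controlled by $C\,\frac{\delta t^{(m)}}{\delta_{\mesh\m}}\sum_n\sum_K|K|\,|(h\m)_K^{n+1}-(h\m)_K^n|$, which tends to $0$ thanks precisely to \eqref{eq:ent2} and \eqref{eq:ent1}. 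Hence $\liminf B_m\ge0$, i.e. $V\le0$, which is \eqref{eq:weakentropy}.

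The crux is this last estimate. At the discrete level the explicit scheme does not control the sign of the potential-energy residual (as noted after \eqref{residu:pot:bound}), so no genuine discrete entropy inequality is available and the inequality can be recovered only in the limit; making the offending term disappear is exactly what forces the two extra hypotheses, namely the reinforced CFL condition $\delta t^{(m)}/\delta_{\mesh\m}\to0$ of \eqref{eq:ent1} and the $L^1(\Omega,BV)$ bound \eqref{eq:ent2} on the height. Verifying that the various correction terms genuinely vanish, rather than merely stay bounded, is where most of the careful bookkeeping lies. By contrast the weak-formulation part, though longer to write because of the staggered convection fluxes, rests only on the $L^\infty$ bounds and the $L^1$ convergence and presents no conceptual difficulty.
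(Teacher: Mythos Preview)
Your global strategy coincides with the paper's: test the discrete equations and the discrete entropy balance \eqref{eq:ent_disc} against smooth functions, perform discrete integrations by parts, and pass to the limit; in particular your treatment of the entropy residual, isolating the sign-indefinite contribution \eqref{residu:pot:bound} and killing it by the product of \eqref{eq:ent1} and \eqref{eq:ent2}, is exactly what the paper does.

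There is, however, a genuine gap in the weak-formulation part. You write that the upwind remainder in the mass step is ``bounded by $\|\nabla\varphi\|_\infty$ times $\delta_{\mesh\m}$ and the $L^\infty$ data, hence vanishing''. This is false: after the discrete integration by parts the remainder takes the form
\[
\sum_n\delta t\sum_{K\in\mesh}\sum_{\edge\in\edges(K)}|D_{K,\edge}|\,(h_\edge^n-h_K^n)\,u_{i,\edge}^n\,(\eth_i\varphi^{n+1})_\edge,
\]
where $(\eth_i\varphi)_\edge$ is a first-order difference quotient, hence of size $\|\nabla\varphi\|_\infty$ with \emph{no} factor $\delta_{\mesh\m}$, and $|h_\edge^n-h_K^n|\le\llbracket h^n\rrbracket_\edge$ carries no intrinsic smallness either; the crude $L^\infty$ estimate therefore gives only $O(1)$, not $o(1)$. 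What actually makes this term (and its analogues in the momentum convection) disappear is the mechanism behind the tools of \cite{gal-19-wea} that the paper invokes: $\sum_K\sum_\edge|D_{K,\edge}|\,\llbracket h^n\rrbracket_\edge$ is controlled by the $L^1$ norm of a spatial translate of $h\m$ by a shift of order $\delta_{\mesh\m}$, and this tends to zero uniformly in $m$ by the Riesz--Fr\'echet--Kolmogorov compactness criterion, available precisely because $h\m\to\bar h$ in $L^1$. Without this translation argument the $L^1$-convergence hypothesis is never actually used in your mass and momentum steps, and the proof does not close. Two minor side remarks: the balance \eqref{eq:ent_disc} already carries the factor $|K|$, so the correct test weight is $\delta t\,\varphi_K^n$, not $|K|\,\delta t\,\varphi_K^n$; and the pressure--velocity correction in the lower bound of $(R_e)_K^{n+1}$, after Abel summation in time, still carries an edge-perimeter factor $\sim|K|/\delta_{\mesh\m}$, so its vanishing also relies on \eqref{eq:ent1}, not on the smoothness of $\varphi$ alone.
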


\begin{proof}
The proof is obtained by passing to the limit in the scheme and in the discrete entropy balance \eqref{eq:ent_disc}, using the tool of \cite{gal-19-wea} (or, more precisely speaking, simplified versions of these tools adapted to Cartesian grids).
The additional assumptions required for the entropy condition are used to prove that the residual term appearing in the discrete potential energy balance, given by \eqref{residu:pot:bound}, tends to zero.
\end{proof}
%
%
\section{Numerical results}

We now assess the behaviour of the scheme on some numerical experiments.
The computations presented here are performed with the CALIF$^3$S free software developed at IRSN \cite{califs}.

\subsection{Rotation in a paraboloid}

This first test case consists in calculating the uniform rotation of a circular drop on a support of parabolic shape (see Figure \ref{fig:drop}).
The computational domain is $(0,L)\times(0,L)$ and the elevation of the support is:
\[
z=-h_0\ \bigl(1-(x- \frac L 2)^2-(y-\frac L 2)^2 \bigr),
\]
with $L=4$ and $h_0=0.1$.
The fluid height is given by
\[
h=h_0\ \max \bigl(0,\ (x -\frac L 2)\,\cos(\omega t) + (y-\frac L 2)\,\sin( \omega t) -z -0.5 \bigr),
\]
and the velocity is
\[
\bfu = \frac 1 2\, \omega\ \begin{bmatrix} -\sin(\omega t) \\ \cos(\omega t) \end{bmatrix}.
\]
It is then easy to check that the mass and momentum balance equations are verified provided that $\omega^2=2\,g\, h_0$.
The solution is thus regular, and this test features a regular topography and dry zones (\ie\ zones where $h=0$).
We compare the numerical and theoretical height obtained after one rotation (\ie\ $\omega t = 2 \pi$), for different uniform grids and with a time step $\delta t=\delta x/8.$ (the maximal speed of sound and the maximal velocity are both close to $1$); results are gathered in the following table.

\medskip
\begin{center}
\begin{tabular}{c | c} \rule[-1.1ex]{0ex}{3.3ex} 
grid & error (discrete $L^1$ norm)
\\ \hline \rule[-1.1ex]{0ex}{3.5ex}
$\quad 100 \times 100 \quad$ &  $3.02\,10^{-3}$
\\ \rule[-1.1ex]{0ex}{3.3ex} 
$\quad 200 \times 200 \quad$ & $1.54\,10^{-3}$
\\ \rule[-1.1ex]{0ex}{3.3ex} 
$\quad 400 \times 400 \quad$ & $0.896\,10^{-3}$
\\ \rule[-1.1ex]{0ex}{3.3ex} 
$\quad 800 \times 800 \quad$ & $0.511\,10^{-3}$
\\ \hline
\end{tabular}
\end{center}

\medskip
We observe an order of convergence between $0.8$ and $1$, which is consistent with a first-order approximation of the fluxes and the time derivative.

\begin{figure}[tp]
  \begin{center} \includegraphics[width=10cm]{./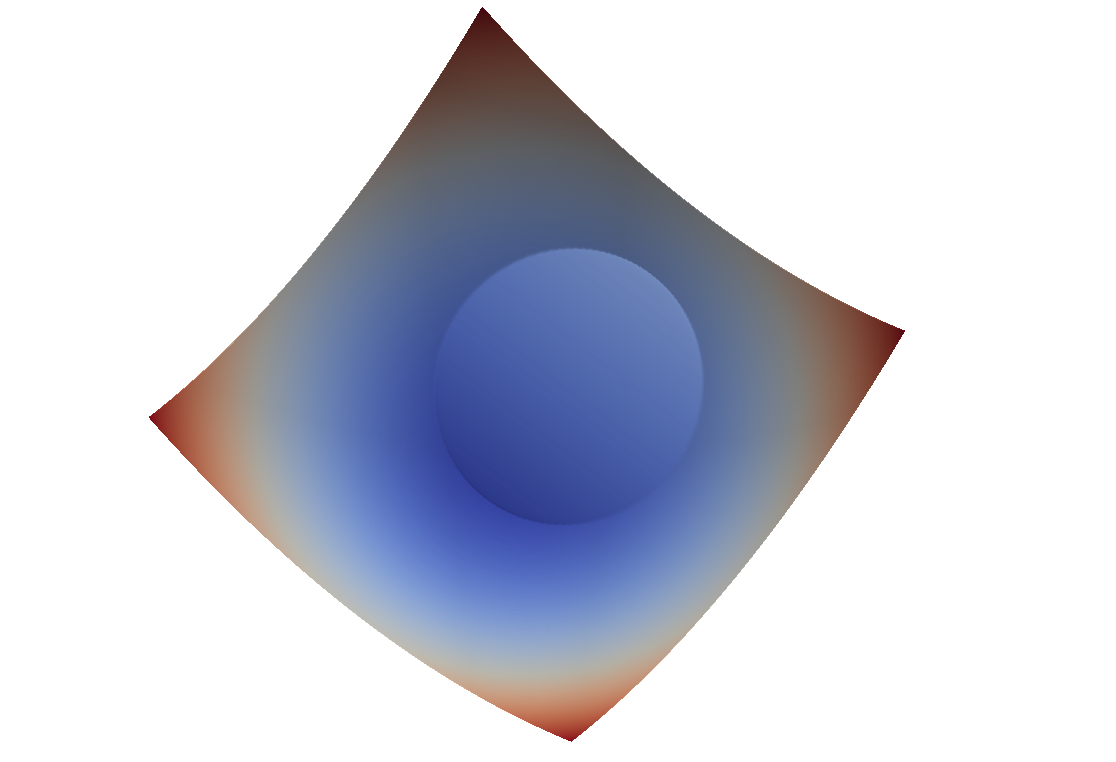} \end{center}
  \caption{Sloshing of a drop on a parabolic support -- State obtained after one revolution (very close to the initial state).}\label{fig:drop}
\end{figure}
%
%
\subsection{A dam-break problem}

In this test, the computational domain is:
\[
\Omega=(0,200)\times(0,200) \setminus \Omega_w \mbox{ with } \Omega_w=(95,105)\times(0,95)\cup (95,17à)\times(0,200).
\]
The fluid is supposed to be initially at rest, and the initial height is $h=10$ for $x_1 \leq 100$ and $h=5$ for $x_1 >100$.
A zero normal velocity is prescribed at all the boundaries of the computational domain.
The computation is performed with a mesh obtained from a $1000 \times 1000$ regular grid, by removing the cells included in $\Omega_w$.
The time step is $\delta t = \delta x /25$ (the maximal speed of sound and the maximal velocity are both close to $10$).
The obtained fluid height is shown at different times on Figure \ref{fig:pdb}; they confirm the efficiency of the scheme, and its capability to deal with reflexion phenomena very simply (\ie\ just by setting the normal velocity at the boundary to zero, by contrast with schemes based on Riemann solvers which need to implement fictitious cells techniques).

\begin{figure}[tp]
  \includegraphics[bb=3cm 2cm 35cm 24cm, clip=true, width=6cm]{./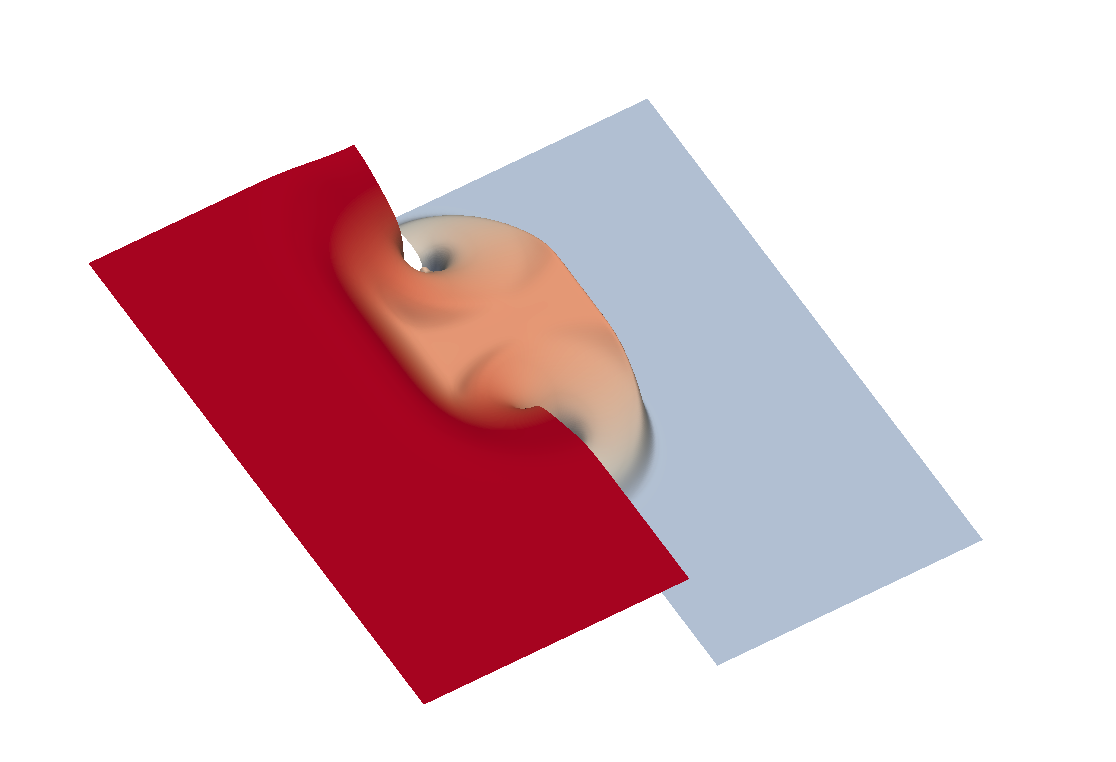}  \includegraphics[bb=3cm 2cm 35cm 24cm, clip=true, width=6cm]{./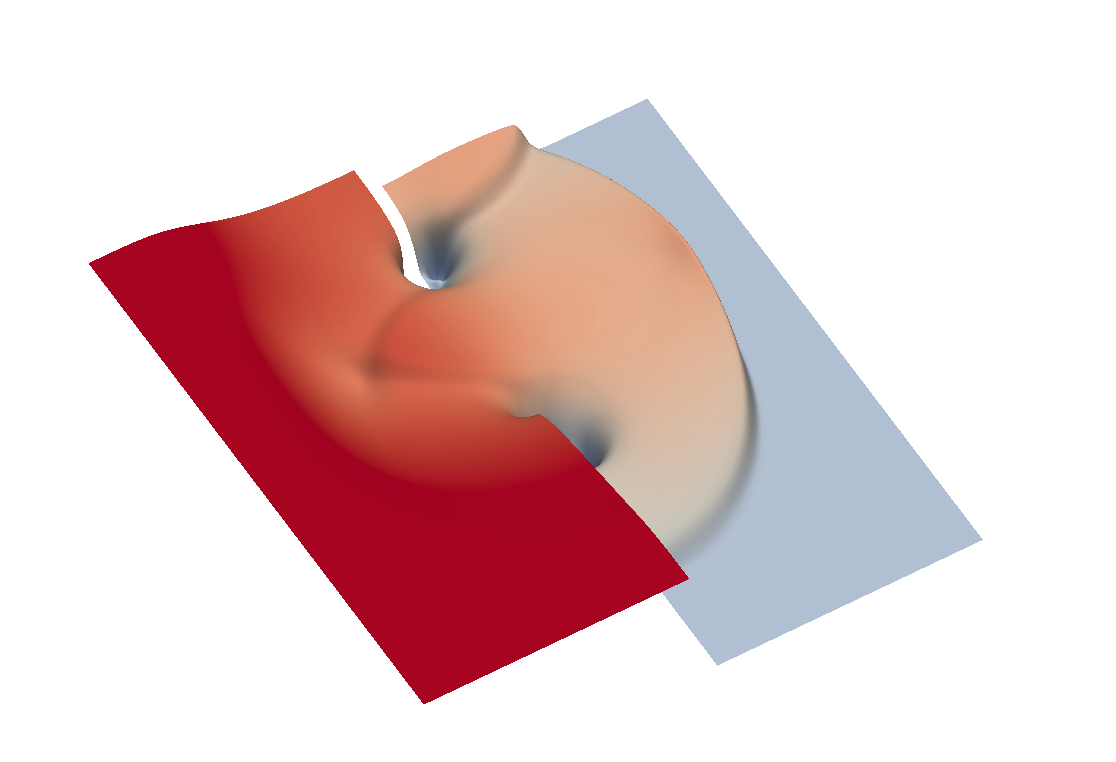}\\
  \includegraphics[bb=3cm 2cm 35cm 26cm, clip=true, width=6cm]{./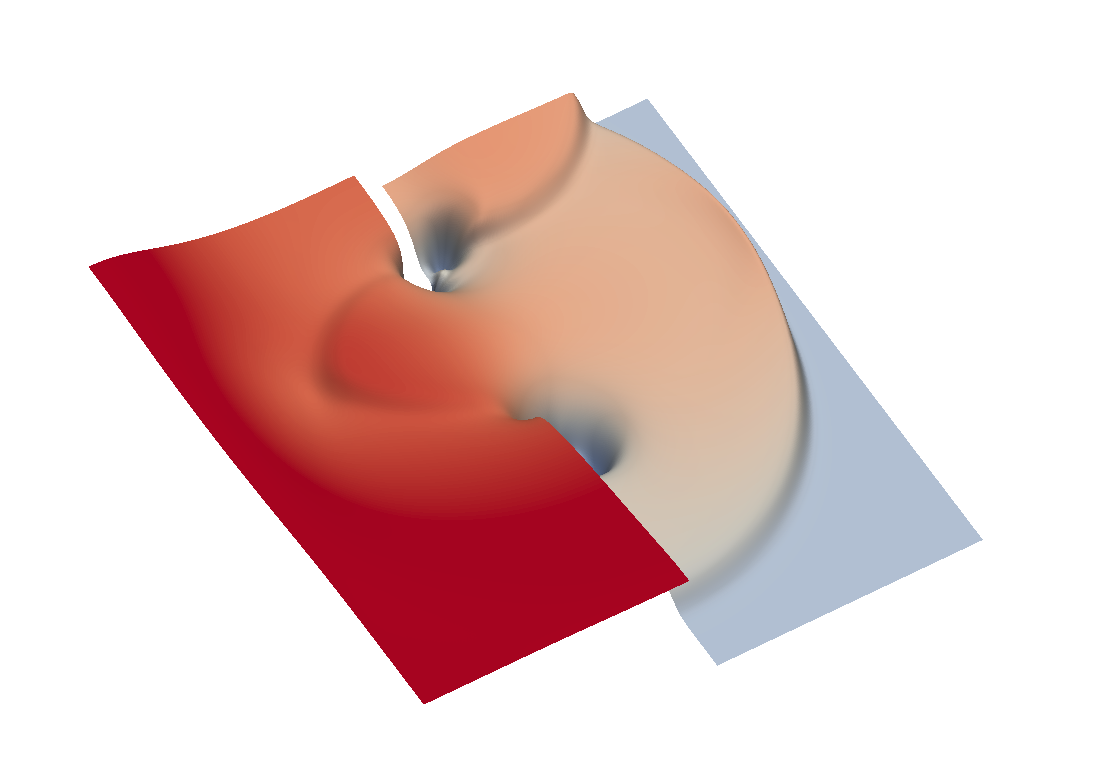} \includegraphics[bb=3cm 2cm 35cm 26cm, clip=true, width=6cm]{./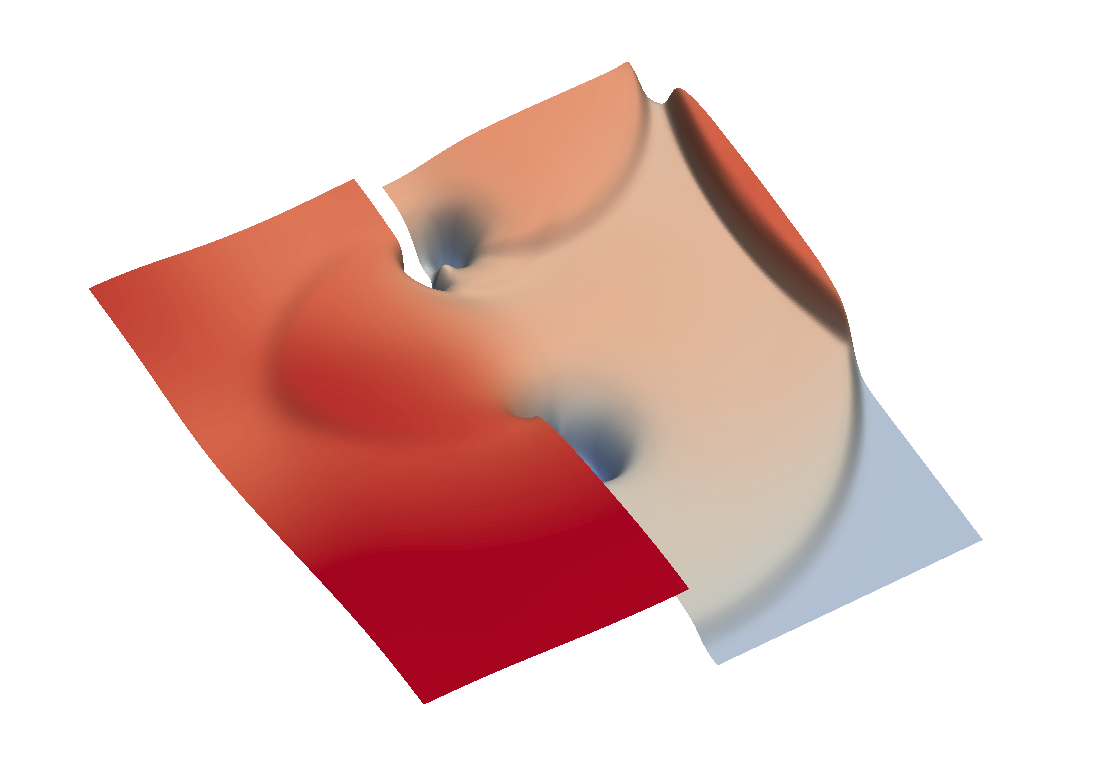}\\
  \includegraphics[bb=3cm 0cm 35cm 27cm, clip=true, width=6cm]{./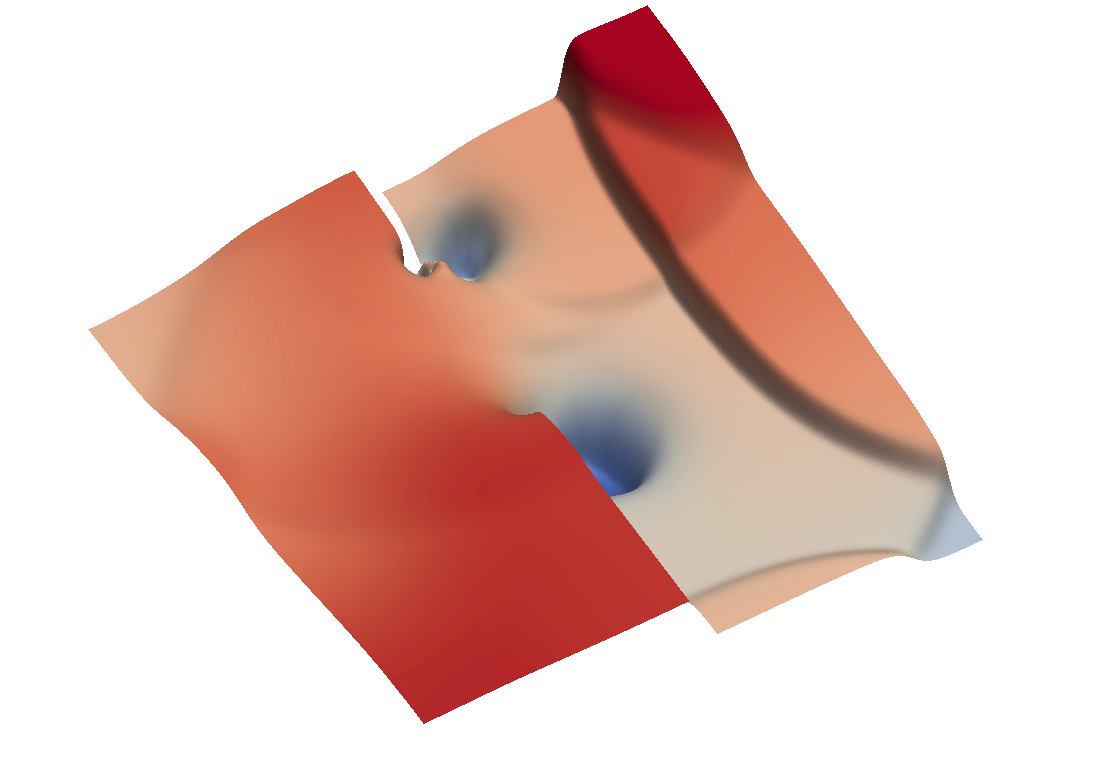} \includegraphics[bb=3cm 0cm 35cm 27cm, clip=true, width=6cm]{./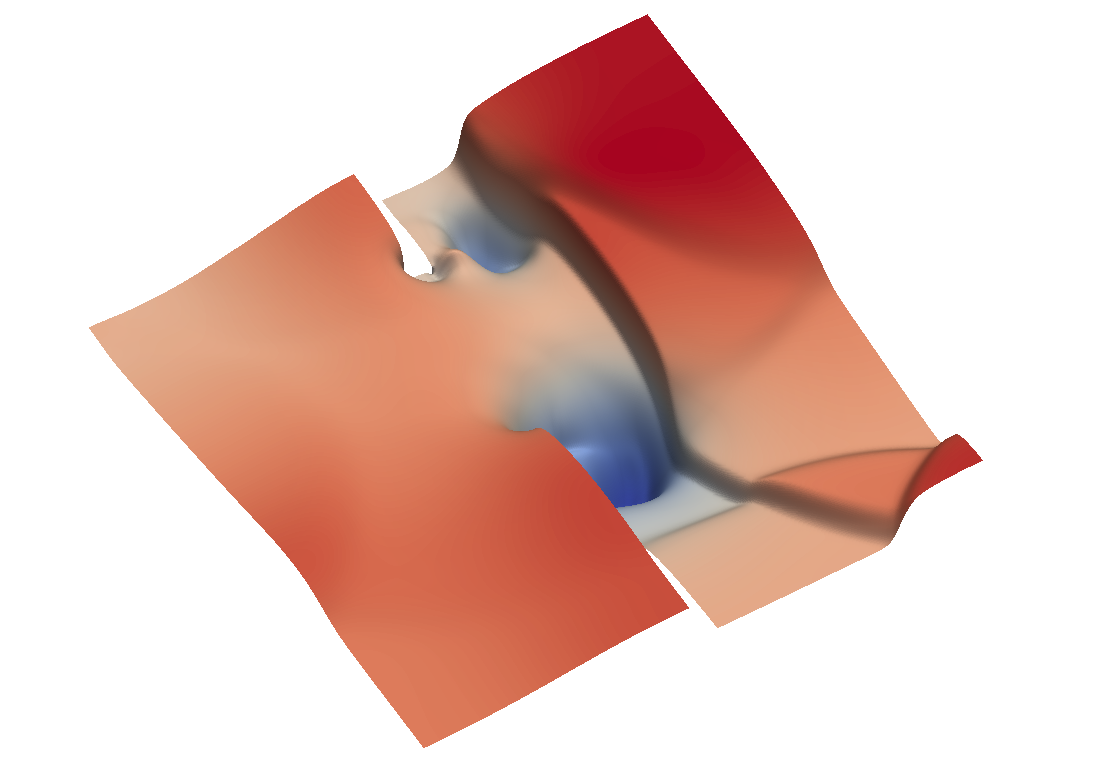}
  \caption{Partial dam break -- Height obtained at $t=4$, $t=8$, $t=10$, $t=12$, $t=16$ and $t=20$ with a mesh obtained (by supression of the zones associated to the obstacles) from a $1000\times 1000$ regular grid. In the last Figure ($t=20$), the obtained minimal and maximal heights are $h=2.149$ and $h=9.306$ respectively.}\label{fig:pdb}
\end{figure}


\ack The authors would like to thank Robert Eymard and Thierry Gallou\"et for several interesting discussions.

\bibliographystyle{acmurl}
\bibliography{sw-jaca}
%
%
\begin{address}
  Rapha\`ele Herbin  and Youssouf Nasseri\\
  Aix-Marseille Universit\'e, Institut de Math\'ematiques de Marseille, \\
  39 rue Joliot Curie\\
  13453 Marseille \\
  \texttt{raphaele.herbin@univ-amu.fr} and \texttt{youssouf.nasseri@univ-amu.fr}
\end{address}
\begin{address}
  Jean-Claude Latch\'e\\
  Institut de Radioprotection et S\^uret\'e Nucl\'eaire,\\
  13115, Saint-Paul-lez-Durance\\
  \texttt{jean-claude.latche@irsn.fr}  
\end{address}
\begin{address}
  Nicolas Therme \\
  CEA/CESTA \\
  33116, Le Barp, France \\
  \texttt{nicolas.therme@cea.fr}
\end{address}
\end{document}